\definecolor{verylight}{gray}{0.97}
\definecolor{light}{gray}{0.93}
\definecolor{medium}{gray}{0.82}
 \def\F{{\mathcal F}}
 \def\opn#1#2{\def#1{\operatorname{#2}}} 
 \opn\chara{char} \opn\length{\ell} \opn\pd{pd} \opn\rk{rk}
 \opn\projdim{proj\,dim} \opn\injdim{inj\,dim} \opn\rank{rank}
 \opn\depth{depth} \opn\grade{grade} \opn\height{height}
 \opn\embdim{emb\,dim} \opn\codim{codim}
 \opn\hreg{hreg}
 \opn\Tr{Tr} \opn\bigrank{big\,rank}
 \opn\superheight{superheight}\opn\lcm{lcm}
 \opn\trdeg{tr\,deg}
 \opn\reg{reg} \opn\lreg{lreg} \opn\ini{in} \opn\lpd{lpd}
 \opn\size{size} \opn\sdepth{sdepth}
 \opn\link{link}\opn\fdepth{fdepth}\opn\lex{lex}
 \opn\div{div} \opn\Div{Div} \opn\cl{cl} \opn\Cl{Cl}
 \opn\Spec{Spec} \opn\Supp{Supp} \opn\supp{supp} \opn\Sing{Sing}
 \opn\Ass{Ass} \opn\Min{Min}\opn\Mon{Mon}
 \opn\Ann{Ann} \opn\Rad{Rad} \opn\Soc{Soc}
 \opn\Im{Im} \opn\Ker{Ker} \opn\Coker{Coker} \opn\Am{Am}
 \opn\Hom{Hom} \opn\Tor{Tor} \opn\Ext{Ext} \opn\End{End}
 \opn\Aut{Aut} \opn\id{id}
 \opn\nat{nat}
 \opn\pff{pf}
 \opn\d{d} \opn\GL{GL} \opn\SL{SL} \opn\mod{mod} \opn\ord{ord}
 \opn\Gin{Gin} \opn\Hilb{Hilb}\opn\sort{sort}
 \opn\aff{aff} \opn
\opn\relint{relint} \opn\st{st}
 \opn\lk{lk} \opn\cn{cn} \opn\core{core} \opn\vol{vol}
 \opn\link{link} \opn\star{star}\opn\lex{lex}\opn\set{set}
 \opn\diam{diam}
 \opn\gr{gr}
 \def\pot#1#2{#1[\kern-0.28ex[#2]\kern-0.28ex]}
 \opn\dirlim{\underrightarrow{\lim}}
 \opn\inivlim{\underleftarrow{\lim}}
 \let\iso=\cong
 \def\Implies{\ifmmode\Longrightarrow \else
         \unskip${}\Longrightarrow{}$\ignorespaces\fi}
 \def\implies{\ifmmode\Rightarrow \else
         \unskip${}\Rightarrow{}$\ignorespaces\fi}
 \def\iff{\ifmmode\Longleftrightarrow \else
         \unskip${}\Longleftrightarrow{}$\ignorespaces\fi}
 \newtheorem{Theorem}{Theorem}[section]
 \newtheorem{Lemma}[Theorem]{Lemma}
 \newtheorem{Corollary}[Theorem]{Corollary}
 \newtheorem{Example}[Theorem]{Example}
 \newtheorem{Definition}[Theorem]{Definition}
 \let\epsilon\varepsilon
 \let\kappa=\varkappa
 \def\qed{\ifhmode\textqed\fi
       \ifmmode\ifinner\quad\qedsymbol\else\dispqed\fi\fi}
 \def\textqed{\unskip\nobreak\penalty50
        \hskip2em\hbox{}\nobreak\hfil\qedsymbol
        \parfillskip=0pt \finalhyphendemerits=0}
 \def\dispqed{\rlap{\qquad\qedsymbol}}
 \opn\dis{dis}
 \def\pnt{{\raise0.5mm\hbox{\large\bf.}}}
 \opn\Lex{Lex}
\begin{document}

 \title {The cleanness of (symbolic) powers of Stanley-Reisner ideals}

 \author {S.  Bandari and A.  Soleyman Jahan}

\address{Somayeh Bandari,  School of Mathematics, Institute for
Research in Fundamental Sciences (IPM) P. O. Box: 19395-5746,
Tehran, Iran.} \email{somayeh.bandari@yahoo.com}

\address{Ali Soleyman Jahan, Department of Mathematics, University of Kurdistan, Post Code 66177-15175, Sanandaj, Iran.}
\email{solymanjahan@gmail.com}

 \subjclass[2010]{13F20; 05E40; 13F55.}
\keywords{Clean; Cohen-Macaulay; complete intersection; matroid;
symbolic power.\\
The research of the first author is supported by a grant from IPM
(No. 93130020)}

 \begin{abstract} Let $\Delta$ be a pure simplicial complex and $I_\Delta$ its Stanley-Reisner ideal in a polynomial ring $S$.
 We show that  $\Delta$ is a matroid (complete intersection)  if and only if
 $S/I_\Delta^{(m)}$ ($S/I_\Delta^m$) is clean for all $m\in\mathbb{N}$.
 If  $\dim(\Delta)=1$, we also prove that $S/I_\Delta^{(2)}$ ($S/I_\Delta^2$) is
 clean if and only if $S/I_\Delta^{(2)}$ ($S/I_\Delta^2$) is
 Cohen-Macaulay.
 \end{abstract}

 \maketitle

  \section*{Introduction}  Let $\Delta$ be a simplicial complex on vertex set $[n]=\{1,\ldots,n\}$ and
  $S=k[x_1,\ldots,x_n]$
  be the polynomial ring in $n$ indeterminate   over a field $k$. The Stanley-Reisner ideal of $\Delta$, $I_\Delta$, is
  defined by $I_\Delta:=(\prod_{i\in F}x_i\;:\; F\not\in \Delta)$.

  There is a bijection between squarefree monomial ideals $I\subset (x_1,\ldots,x_n)^2$ and simplicial complexes.
  Cohen-Macaulayness (resp. Buchsbaumness, cleanness, generalized Cohen-Macaulayness) of these ideals have been studied
  by several authors (see \cite{BH}, \cite{HMT}, \cite{HP},  \cite{MT},  \cite{St1},\cite{TT}, \cite{V}).
  Minh and Trung in \cite{MT}
  and Varbaro in \cite{Va} independently proved that $\Delta$ is a matroid  if and only if $S/I_\Delta^{(m)}$ is Cohen-Macaulay for
  all $m\in\mathbb{N}$, where $I_\Delta^{(m)}$ denote the $m^{th}$-symbolic power of $I_\Delta$. Later on, Terai and Trung \cite{TT}
  showed that $\Delta$ is a matroid if and only if $S/I_\Delta^{(m)}$ is Cohen-Macaulay for some integer $m\geq 3$. The similar
  characterizations of being Buchsbaum and generalized Cohen-Macaulay were also studied by them.
 Minh and Trung in \cite{MT1} proved that for simplicial complex
 $\Delta$ with $\dim(\Delta)=1$, $I_\Delta^{(2)}$ is  Cohen-Macaulay if and only if $\diam(\Delta)\leq 2$,
 where $\diam(\Delta)$ denotes the diameter of $\Delta$. We pursue this line of research further.

  This paper is organized as follows: in Section 1, we collect  some preliminaries which will be needed later.
  In Section 2, we show that if $\Delta$ is a matroid, then $S/I_\Delta^{(m)}$ is clean for all $m\in\mathbb{N}$;
  see Theorem \ref{main}. Since $I_\Delta$ is unmixed, in particular, this shows that $S/I_\Delta^{(m)}$ is Cohen-Macaulay for
  all $m\in\mathbb{N}$. Therefore this result covers one direction of the result of Minh and Trung \cite{MT} and Varbaro  \cite{Va}.
  Our proof is combinatorial and much easier than that was given in \cite{MT}. As our first corollary, by using \cite[Theorem 3.6]{TT},
  we show that if $\Delta$ is pure and $I=I_\Delta\subset S$, then the following conditions are equivalent:
 \begin{enumerate}
 \item[(a)] $\Delta$ is a matroid.
 \item[(b)] $S/I^{(m)}$   is clean  for all integer $m>0$.
  \item[(c)] $S/I^{(m)}$   is clean  for some integer $m\geq 3$.
 \item[(d)]$S/I^{(m)}$   is Cohen-Macaulay  for some integer $m\geq 3$.
  \item[(e)]$S/I^{(m)}$   is Cohen-Macaulay  for all integer $m>0$.
  \end{enumerate}
 Our second corollary asserts that a pure simplicial complex $\Delta$ is complete intersection if and
 only if $S/I^m_\Delta$ is clean for all $m\in\mathbb{N}$ and if and only if $S/I^m_\Delta$ is clean for
 some integer $m\geq 3$.

Let $I\subset S$ be a monomial ideal such that $S/I$ is (pretty)
clean. It is natural to ask whether $S/I^m$ or $S/I^{(m)}$ is  again
(pretty) clean for all $m\in\mathbb{N}$? Example \ref{counter1}
shows that the answer is negative in general.

In Section 3, we show that if  $I\subset S$ is the Stanley-Reisner
ideal of  a pure simplicial complex $\Delta$ with  $\dim\Delta =1$,
then
 for an integer $m>1$, $S/I^{(m)}$ ($S/I^m$) is clean if and only if
 $S/I^{(m)}$ ($S/I^m$) is Cohen-Macaulay.

\section{Preliminary}

A simplicial complex $\Delta$ on vertex set $[n]=\{1,\ldots,n\}$ is a collection of subsets of $[n]$ with the
property that if $F\subset G$ and $G\in \Delta$, then $F\in\Delta$. An element of $\Delta$ is called a {\em
 face}, and the maximal faces of $\Delta$, under inclusion, are called {\em facets}. We denote by $\mathcal
F(\Delta)$ the set of facets of $\Delta$. When $\mathcal
F(\Delta)=\{F_1,\ldots,F_t\}$, we write $\Delta=\langle F_1,\ldots,
F_t\rangle$. For each $F\in\Delta$, we set $\dim F:=|F|-1$, and
$$\dim\Delta:=\max\{\dim F\;:\; F\in\mathcal F(\Delta)\},$$
which is called the dimension of $\Delta$. A simplicial complex
$\Delta$ is called {\em pure} if all facets of $\Delta$ have the
same dimension. According to Bj\"{o}rner and Wachs \cite{BW}, a
simplicial complex $\Delta$ is said to be {\em (non-pure) shellable}
if there exists an order $F_1,\ldots, F_t$ of the facets of $\Delta$
such that for each $2\leq i\leq t$, $\langle F_1,\ldots,
F_{i-1}\rangle \cap \langle F_i \rangle$ is a pure $(\dim
F_i-1)$-dimensional simplicial complex. Such an ordering of facets is called a
{\em shelling}.

Let $S=k[x_1,\ldots,x_n]$ be a polynomial ring in $n$ indeterminate
over a field $k$. The Stanley-Reisner ideal of $\Delta$ is denoted
by $I_\Delta$ and defined as $I_\Delta:=(\prod_{i\in F}  x_i\;:\;
F\not\in\Delta)$. The facet ideal of $\Delta$ is defined as
$I(\Delta):=(\prod_{i\in F}x_i\;:\; F\in\mathcal F(\Delta))$.

The complement of a face $F$ is $[n]\setminus F$ and it is denoted
by $F^c$. Also, the complement of a simplicial complex $\Delta =
\langle F_1,\ldots,F_r\rangle$ is $\Delta^c:= \langle
F_1^c,\ldots,F_r^c\rangle$. The {\em Alexander dual} of $\Delta$ is
given by $\Delta^\vee:= \{F^c \;:\; F\not\in \Delta\}$. It is known
that for a simplicial complex $\Delta$ one has
$I_{\Delta^\vee}=I(\Delta^c)$.

 \begin{Definition}
{\em A {\em matroid} $\Delta$ is
a simplicial complex with the property that for all faces $F$ and $G$ in $\Delta$ with $|F|<|G|$,
there exists $i\in G\setminus F$ such that $F\cup\{i\}\in\Delta$.}
 \end{Definition}

 The above definition implies that each matroid is pure.
 As a consequence of \cite[Theorem 12.2.4]{HH}, a matroid can be characterized by
the following exchange property: a pure simplicial complex $\Delta$ is a matroid if and only if for any two facets $F$ and
 $G$ of $\Delta$ with $F\neq G$, and for any
$i\in F\setminus G$, there exists $j\in G\setminus F$ such that $(F\setminus\{i\})\cup\{j\}\in\Delta$.
 A squarefree monomial ideal $I$ in $S$ is called {\em matroidal} if  $I=I(\Delta)$, where $\Delta$ is a matroid.
 On the other hand by \cite[Theorem 2.1.1]{O},
$\Delta$ is a matroid  if and only if $\Delta^c$ is a matroid.
Altogether, as  $I(\Delta^c)=I_{\Delta^\vee}$, we have that $\Delta$ is a matroid  if and only if $I_{\Delta^\vee}$ is matroidal.

A simplicial complex $\Delta$ is called {\em complete intersection} if $I_\Delta$ is a complete intersection monomial ideal.
It is well known that each complete intersection simplicial complex is matroid.

 If $F\subseteq [n]$, then we put $P_F:=(x_i : i\in F)$. We have
 $I_\Delta=\bigcap_{F\in\mathcal F(\Delta^c)} P_F$, hence for
 each $m\in\mathbb{N}$, the  $m^{th}$-symbolic power of $I_\Delta$ is the ideal
$$I_\Delta^{(m)}=\bigcap_{F\in\mathcal F(\Delta^c)} P_F^m.$$

An ideal $I\subset S$ is called {\em normally torsionfree} if
$\Ass(S/I^m)\subseteq \Ass(S/I)$ for all $m\in\mathbb{N}$. If $I$ is
a squarefree monomial ideal, then $I$ is normally torsionfree if and
only if $I^{(m)}=I^m$ for all $m$; see \cite[Theorem 1.4.6]{HH}.

Let $I\subset S$ be a monomial ideal. A chain of monomial ideals
$$\mathcal{F}: I=I_0\subset I_1\subset\cdots\subset I_r=S$$ is called a {\em prime filtration} of $S/I$ if for
each $i=1,\ldots,r,$ there exists a monomial prime ideal $\frak p_i$ of $S$ such that
$I_i /I_{i-1}\iso S/\frak p_i$. The set of prime ideals
$\frak p_1,\ldots,\frak p_r$ which define the cyclic quotients of $\mathcal{F}$ will be
denoted by $\Supp \mathcal{F}$. It is known (and easy to see) that
$$\Ass S/I\subseteq \Supp \mathcal{F}\subseteq \Supp S/I.$$ Let $\Min I$ denote the set of
minimal prime ideals of $\Supp S/I$. Dress \cite{D} called a prime
filtration $\mathcal{F}$ of $S/I$ {\em clean} if $\Supp
\mathcal{F}=\Min I$ and in \cite[Theorem on page 53]{D} proved that
a simplicial complex $\Delta$ is (non-pure) shellable if and only if
$K[\Delta]$ is a clean ring. Pretty clean filtrations were defined
as a generalization of clean filtrations by Herzog and Popescu
\cite{HP}. A prime filtration $\mathcal{F}$ is called {\em pretty
clean} if for all $i<j$ for which $\frak p_i\subseteq \frak p_j$, it
follows that $\frak p_i=\frak p_j$. If $\mathcal{F}$ is a pretty
clean filtration of $S/I$, then $\Supp \mathcal{F}=\Ass S/I$; see
\cite[Corollary 3.4]{HP}. $S/I$ is called {\em clean} (resp. {\em
pretty clean}) if it admits a clean (resp. pretty clean) filtration.
Obviously, cleanness implies pretty cleanness.

Let $I\subset S$ be a monomial ideal. Then $S/I$ is
{\em sequentially Cohen-Macaulay} if there exist a chain of monomial   ideals
$$I=I_0\subset I_1\subset I_2\subset \cdots\subset I_r=S$$
such that each quotient $I_i/I_{i-1}$ is Cohen-Macaulay and
$$\dim(I_1/I_0)<\dim(I_2/I_1)<\cdots<\dim(I_r/I_{r-1}).$$
Clearly, if $S/I$ is Cohen-Macaulay, then it is sequentially Cohen-Macaulay. Also, if $S/I$ is
pretty clean, then by \cite{HP} it is sequentially Cohen-Macaulay.

The monomial ideal $I$ has {\em linear quotients} if one can order the set of minimal monomial
generators of $I$, $G(I)=\{u_1,\ldots,u_m\}$, such that the colon ideal $(u_1,\ldots,u_{i-1}):u_i$
is generated by a subset of the variables for all $i=2,\ldots,m$.  This means for each $j<i$, there
exists a $k<i$ such that $u_k:u_i=x_t$ and $x_t|u_j:u_i$, where  $t\in [n]$ and $u_k:u_i=u_k/\gcd(u_k,u_i)$.
In the case $I$ is squarefree, it is enough to show that for each $j<i$, there exists a $k<i$ such
that $u_k:u_i=x_t$ and $x_t|u_j$  for some  $t\in [n]$.

Let $u=\prod_{i=1}^nx_i^{a_i}$ be a monomial in $S$. Then
$$u^p:=\textstyle\prod\limits_{i=1}^n\textstyle\prod\limits_{j=1}^{a_i}x_{i,j}
\in k[x_{1,1},\ldots, x_{1,a_1},\ldots, x_{n,1},\ldots, x_{n,a_n}]$$
is called {\em polarization} of
$u$. Let  $I$ be a monomial ideal of $S$ with the unique set of minimal monomial generators $G(I)=
\{u_1,\ldots, u_m\}$. Then the ideal $I^p:=(u_1^p,\ldots, u^p_m)$ of $$T:=k[x_{i,j}\;:\;i=1,
\ldots, n,  j=1,\ldots ,a_i]$$ is called {\em polarization} of $I$.

 \section{Matroids and complete intersection simplicial complexes}

We will characterize matroids (complete intersection simplicial complexes) $\Delta$ in term
of the cleanness of the symbolic (ordinary) powers of $I_\Delta$.

 \begin{Theorem} \label{main}
 Let $I\subset S$ be the Stanley-Reisner ideal of a matroid $\Delta$. Then   $S/I^{(m)}$ is clean for all $m\in\mathbb{N}$.
 \end{Theorem}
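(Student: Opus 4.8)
The plan is to exhibit an explicit clean filtration of $S/I^{(m)}$ built from the primary decomposition $I^{(m)}=\bigcap_{F\in\mathcal F(\Delta^c)}P_F^m$. The natural strategy is to pass through the theory of polarization and shellability: by a result of Herzog and Popescu, $S/J$ is clean (resp. the polarization $T/J^p$ is the Stanley--Reisner ring of a shellable complex) precisely when one can shell the associated object, and polarization preserves cleanness in the sense that $S/J$ is clean iff $T/J^p$ is. So the first step is to reduce to showing that the polarization of $I^{(m)}$ defines a shellable simplicial complex, equivalently that $(I^{(m)})^p$ has linear quotients as a squarefree monomial ideal (more precisely, that its Alexander dual does, via the Eagon--Reiner/Herzog--Hibi--Zheng type correspondence between shellability of $\Delta$ and linear quotients of $I_{\Delta^\vee}$).

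The heart of the matter is then combinatorial: describe the minimal generators of $(I^{(m)})^p$ and order them so as to get linear quotients. Since $I^{(m)}=\bigcap_F P_F^m$ with $F$ ranging over facets of $\Delta^c$, its generators are least common multiples $\lcm(w_F : F)$ where $w_F\in P_F^m$ is a degree-$m$ monomial supported on $F$; dually, the generators of the Alexander dual are indexed by choices, for each facet $G$ of $\Delta$, of how the exponent vector is distributed. I would use the matroid exchange property stated in the excerpt — for facets $F\ne G$ and $i\in F\setminus G$ there is $j\in G\setminus F$ with $(F\setminus\{i\})\cup\{j\}\in\Delta$ — to perform an exchange argument on these generators: given two generators $u_j, u_i$, the matroid axiom lets me locate a variable $x_{t}$ (in the polarized ring, a variable $x_{t,s}$) such that $u_k:u_i=x_{t,s}$ for some earlier $u_k$ and $x_{t,s}\mid u_j$. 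Choosing the right total order — lexicographic with respect to a fixed ordering of the vertices, refined on the polarization variables — should make this exchange always decrease the generator in the order, which is exactly the linear-quotients condition.

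The main obstacle I anticipate is the bookkeeping in the polarized ring: the symbolic power $I^{(m)}$ is not squarefree, so the matroid exchange, which is a statement about \emph{sets}, has to be lifted to a statement about \emph{multisets} of exponents and then to the squarefree polarization. Making the induction on $m$ (or on the degree) interact cleanly with the exchange property — ensuring that after an exchange the resulting monomial is still a generator of $I^{(m)}$ and that the witnessing variable genuinely lies in an earlier generator — is the delicate point; one likely needs to exchange one unit of exponent at a time and track which "layer" $x_{t,s}$ of the polarization is affected. Once the linear-quotients ordering is in hand, cleanness follows formally: linear quotients of the Alexander dual give a shelling of the polarized complex, hence $T/(I^{(m)})^p$ is clean, hence $S/I^{(m)}$ is clean, and finally unmixedness of $I$ (each $P_F$ has the same height since $\Delta$, being a matroid, is pure, so $\Delta^c$ is pure) guarantees the filtration is in fact clean and not merely pretty clean, and also yields the Cohen--Macaulayness remark.
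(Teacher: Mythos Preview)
Your plan is correct and is essentially the same route the paper takes: polarize, pass to the Alexander dual $J=((I^{(m)})^p)^\vee$, and verify that $J$ has linear quotients using the matroid exchange property (applied to $I_{\Delta^\vee}=I(\Delta^c)$, which is matroidal). The paper resolves the bookkeeping you anticipate by ordering the polarized variables $x_{i,a}$ so that the layer index $a$ dominates ($x_{i,a}>x_{j,b}$ iff $a<b$, or $a=b$ and $i<j$) and taking the reverse lexicographic order on $G(J)$; with this choice the exchange produces a witness $w\in G(J)$ with $w:v$ a single variable, and the layer-first ordering guarantees $w>v$.
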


 \begin{proof}
 Let $I=I_\Delta=\bigcap_{i=1}^t P_{F_i}$ be the irredundant irreducible primary decomposition of $I$, where
  $\Delta^c= \langle F_1,\ldots,F_t\rangle$ and $r=|F_i|$ for all $i=1,\ldots, t$. Then $I^{(m)}=\bigcap_{i=1}^tP_{F_i}^m$.
  By \cite[Theorem 3.10]{S} it is enough to show that
 $T/(I^{(m)})^p$ is clean.

  One can see by \cite[Proposition 2.3(3)]{F} that $((I^{(m)})^p)^\vee=\sum_{i=1}^r((P_{F_i}^m)^p)^\vee$. If $F_i=\{s_1,\ldots,s_r\}$,
  then by \cite[Proposition 2.5(2)]{F}   $(P_{F_i}^m)^p $ has the following irredundant irreducible primary decomposition
 $$(P_{F_i}^m)^p=\bigcap_{\substack{
            1\le t_j \le m\\
            \sum t_j\leq m+r-1}}
     (x_{s_1,t_1},\ldots,x_{s_r,t_r})$$
It follows that the ideal $J:=((I^{(m)})^p)^\vee$ is generated by
the monomials
$$ x_{i_1,a_1}x_{i_2,a_2}\ldots x_{i_r,a_r}\quad
\text{with}\quad \{i_1,\ldots,i_r\}\in \F(\Delta^c),$$
 where
$a_j$ are positive integers satisfying  $1\leq a_j\leq m$ and
$\sum_{j=1}^ra_j\leq m+r-1$. For showing that $T/(I^{(m)})^p$ is
clean, it is enough to show that
 $J$ has linear quotients; see for example \cite[Lemma 2.1]{BDS}.

 Now, we order the variables in $T$ as follows:

$ x_{i,a}> x_{j, b} \Leftrightarrow (i,a)<(j,b)$, and $(i,a)<(j,b)$
if $a<b$, or $a=b$ and $i<j$. Then we show that $J$ has linear
quotients with respect to the reverse lexicographical order of its
generators induced from the above ordering. Indeed,  let
$u=x_{i_1,a_1}x_{i_2,a_2}\ldots x_{i_r,a_r}$ and
$v=x_{j_1,b_1}x_{j_2,b_2}\ldots x_{j_r,b_r}$ be two monomials in
$G(J)$ with
 $u>v$. We have to show that  there exists $w\in G(J)$ with $w>v$ such that $w:v=x_{i_l,a_l}$ and $x_{i_l,a_l}|u$.

 Since $u>v$, there exists  an integer $t$ such that $x_{i_t,a_t}> x_{j_t, b_t}$ and $x_{i_k,a_k}=x_{j_k, b_k}$ for
 all  $k>t$. In particular, we have $a_t<b_t$, or $a_t=b_t$ and $i_t<j_t$.
We first claim that there exists $1\leq l\leq t$ such that
$$x_{j_1}\ldots x_{j_{t-1}}x_{i_l}x_{j_{t+1}}\ldots x_{j_r} \in G(I_{\Delta^\vee})=G(I(\Delta^c)).$$
This is obvious, if $x_{j_t}|x_{i_1}x_{i_2}\cdots x_{i_t}$, and  if
$x_{j_t}\nmid x_{i_1}x_{i_2}\cdots x_{i_t}$, then, as $I^\vee$ is
matroidal, it follows that there exists $1\leq l\leq t$ such that
$x_{j_1}\cdots x_{j_{t-1}}x_{i_l}x_{j_{t+1}}\cdots x_{j_{r}}\in
G(I^\vee)$.   Here, we used the fact that $i_k=j_k$ for
$k=t+1,\ldots,r$. Then
\[w:=x_{j_1,b_1}x_{j_2,b_2}\cdots x_{j_{t-1},b_{t-1}}x_{i_l,a_l}x_{j_{t+1},b_{t+1}}\cdots x_{j_{r},b_r}\in G(J),
\]
because $a_l\leq b_t$. Moreover, we have  $w:v=x_{i_l,a_l}$ and $x_{i_l,a_l}|u$.

Next, we will show that $w>v$. If $x_{i_l,a_l}>x_{j_{t-1},b_{t-1}}$, then $w>v$ because $x_{j_{t-1},b_{t-1}}>x_{j_t,b_t}$. Otherwise,
one has $x_{i_l,a_l}<x_{j_{t-1},b_{t-1}}$.  We know that $ a_t<b_t$ or $a_t=b_t$ and $i_t<j_t$. Since $a_l\leq a_t$, if $a_t<b_t$,
then $w>v$. Now, assume that $a_t=b_t$ and $i_t<j_t$. Since  $a_l< a_t$ or   $a_l=a_t$, and $i_l<i_t<j_t$, one has
$x_{i_l,a_l}>x_{j_t,b_t}$ and $w>v$.
\end{proof}

We shall use the  following lemma.

\begin{Lemma}\label{seq}
Let $I\subset S$ be a  monomial ideal. Then $S/I$ is Cohen-Macaulay if and only if $S/I$ is sequentially Cohen-Macaulay and $I$
is unmixed.
\end{Lemma}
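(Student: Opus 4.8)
The plan is to prove Lemma \ref{seq} by establishing the one non-trivial implication: if $S/I$ is sequentially Cohen-Macaulay and $I$ is unmixed, then $S/I$ is Cohen-Macaulay. The converse is immediate, since a Cohen-Macaulay ring is always sequentially Cohen-Macaulay, and a Cohen-Macaulay monomial ideal is unmixed (all associated primes have the same height, hence the same dimension).

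For the forward direction, recall that $S/I$ being sequentially Cohen-Macaulay means there is a chain $I=I_0\subset I_1\subset\cdots\subset I_r=S$ with each $I_i/I_{i-1}$ Cohen-Macaulay and with strictly increasing dimensions $\dim(I_1/I_0)<\dim(I_2/I_1)<\cdots<\dim(I_r/I_{r-1})$. The first step is to identify $\dim(I_1/I_0)=\dim(I_1/I)$ with the lowest-dimensional "layer" of $S/I$; in fact $I_1/I$ is the submodule of $S/I$ of all elements whose support (as a module) has dimension $\le \dim(I_1/I_0)$. Since $I$ is unmixed, every associated prime of $S/I$ has dimension $\dim(S/I)$, so $S/I$ has no nonzero submodule of dimension strictly less than $\dim(S/I)$. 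This forces $\dim(I_1/I_0)=\dim(S/I)$, and then the strict monotonicity of the dimensions forces $r=1$: the chain is just $I=I_0\subset I_1=S$. Hence $S/I=I_1/I_0$ is Cohen-Macaulay.

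The cleanest way to organize the key step is to invoke the standard structure of the sequentially Cohen-Macaulay filtration: one may assume each $I_i/I_{i-1}$ is a Cohen-Macaulay module of dimension $d_i$ with $d_1<d_2<\cdots<d_r$, and $I_1/I_0$ can be taken to be the dimension filtration piece $\delta_{d_1}(S/I)$, the largest submodule of $S/I$ of dimension $\le d_1$. Unmixedness of $I$ says $\Ass(S/I)$ is concentrated in dimension $\dim S/I$; if $I_1/I_0\ne 0$ it has an associated prime, which is an associated prime of $S/I$, of dimension $\dim S/I$, so $d_1=\dim S/I$, and since $d_r\le\dim S/I$ as well we get $r=1$. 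If instead $I_1/I_0=0$, then $I=I_1$ and we repeat the argument with $S/I_1$; but $\dim(S/I_1)<\dim(S/I)$ unless $I_1=S$, and one checks this can only terminate with the whole module being Cohen-Macaulay of the top dimension. Either way $S/I$ is Cohen-Macaulay.

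The main obstacle is the bookkeeping around the bottom of the filtration: making precise that in a sequentially Cohen-Macaulay module the smallest nonzero quotient $I_1/I_0$ realizes the minimal dimension among associated primes, and reconciling the possibility $I_0=I_1$. This is routine once one uses the characterization (from the theory recalled before Lemma \ref{seq}, following Herzog--Popescu and Stanley) that the dimension filtration of a sequentially Cohen-Macaulay module has Cohen-Macaulay quotients, together with the elementary fact that a finitely generated module with no embedded nor low-dimensional primes has no nonzero submodule of smaller dimension. I expect no delicate commutative-algebra input beyond these standard facts; the proof is short.
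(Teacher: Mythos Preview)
Your argument is correct, and it reaches the same conclusion as the paper by a slightly different (and arguably more direct) route. The paper invokes two cited facts: \cite[Lemma~1.2]{HPV} gives $\depth(S/I)=\dim(I_1/I_0)$, and \cite[Proposition~2.5]{HP} gives $\Ass(S/I)=\bigcup_i\Ass(I_i/I_{i-1})$; from the latter plus unmixedness one gets $\dim(I_i/I_{i-1})=\dim(S/I)$ for all $i$, and then the depth formula yields $\depth(S/I)=\dim(S/I)$. You bypass the depth formula entirely: since $I_1/I$ is a submodule of $S/I$, any associated prime of $I_1/I_0$ lies in $\Ass(S/I)$, and unmixedness forces $d_1=\dim(S/I)$; the strict chain $d_1<\cdots<d_r\le\dim(S/I)$ then collapses to $r=1$, so $S/I=I_1/I_0$ is Cohen--Macaulay. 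Your approach avoids the external citation to the depth formula, at the cost of appealing (implicitly) to the dimension-filtration description of sequential Cohen--Macaulayness; the paper's approach is more modular in that it works even without knowing $r=1$. The digression about the case $I_1/I_0=0$ is unnecessary, since the strict inequality of dimensions already excludes zero quotients, but it does no harm.
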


\begin{proof}
If $S/I$ is Cohen-Macaulay, then it is obvious that $S/I$ is sequentially Cohen-Macaulay and $I$ is unmixed.
Conversely, assume that $S/I$ is
sequentially Cohen-Macaulay and  $I$ is unmixed. Then there exists a chain of monomial  ideals
$$I=I_0\subset I_1\subset I_2\subset \cdots\subset I_r=S$$
such that each quotient $I_i/I_{i-1}$ is Cohen-Macaulay and
$$\dim(I_1/I_0)<\dim(I_2/I_1)<\cdots<\dim(I_r/I_{r-1}).$$
By \cite[Lemma 1.2]{HPV},  $\depth(S/I)=\dim(I_1/I_0)$. On the other
hand by \cite[Proposition 2.5]{HP},
$\Ass(S/I)=\bigcup_{i=1}^r\Ass(I_i/I_{i-1})$.   Since $I$ is
unmixed, it follows that $\dim(S/I)=\dim(I_i/I_{i-1})$ for all $i$.
Hence $\depth(S/I)=\dim(I_1/I_0)=\dim(S/I),$ and so $S/I$ is
Cohen-Macaulay.
\end{proof}

If we combine our results with \cite[Theorem 3.6]{TT}, we get the following characterization of matroids.

\begin{Corollary}\label{main2}
 Let $\Delta$ be a pure simplicial complex and $I=I_\Delta\subset S$. Then the following conditions are equivalent:
 \begin{enumerate}
 \item[(a)] $\Delta$ is  a matroid.
 \item[(b)] $S/I^{(m)}$   is clean  for all integer $m>0$.
  \item[(c)] $S/I^{(m)}$   is clean  for some integer $m\geq 3$.
 \item[(d)]$S/I^{(m)}$   is Cohen-Macaulay  for some integer $m\geq 3$.
  \item[(e)]$S/I^{(m)}$   is Cohen-Macaulay  for all integer $m>0$.
  \end{enumerate}
\end{Corollary}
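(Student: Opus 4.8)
The plan is to prove Corollary \ref{main2} by exhibiting a cycle of implications among the five conditions, relying on Theorem \ref{main} for the main new content and on \cite[Theorem 3.6]{TT} together with Lemma \ref{seq} to close the loop. The implications I would establish are (a) $\Rightarrow$ (b) $\Rightarrow$ (c) $\Rightarrow$ (d) $\Rightarrow$ (a), and separately (a) $\Rightarrow$ (e) $\Rightarrow$ (d), which together give the full equivalence.

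First, (a) $\Rightarrow$ (b) is precisely Theorem \ref{main}: if $\Delta$ is a matroid, then $S/I^{(m)}$ is clean for every $m\in\mathbb{N}$. The implication (b) $\Rightarrow$ (c) is trivial, since a statement for all $m>0$ in particular holds for some $m\geq 3$. For (c) $\Rightarrow$ (d): cleanness implies pretty cleanness, and by \cite{HP} a pretty clean ring is sequentially Cohen-Macaulay; thus $S/I^{(m)}$ is sequentially Cohen-Macaulay for that $m\geq 3$. Since $\Delta$ is pure, $I_\Delta$ is unmixed, and for a squarefree monomial ideal the symbolic power $I^{(m)}=\bigcap_{F\in\mathcal F(\Delta^c)} P_F^m$ is again unmixed (all the associated primes $P_F$ have the same height $r$). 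Hence Lemma \ref{seq} applies and $S/I^{(m)}$ is Cohen-Macaulay, giving (d).

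Next, (d) $\Rightarrow$ (a) is exactly \cite[Theorem 3.6]{TT}: if $S/I_\Delta^{(m)}$ is Cohen-Macaulay for some integer $m\geq 3$, then $\Delta$ is a matroid. This closes the cycle (a) $\Rightarrow$ (b) $\Rightarrow$ (c) $\Rightarrow$ (d) $\Rightarrow$ (a), so (a)--(d) are all equivalent. Finally, (a) $\Rightarrow$ (e) follows from Theorem \ref{main} together with the remark in the introduction that cleanness implies Cohen-Macaulayness here (using unmixedness of $I^{(m)}$ and Lemma \ref{seq} as above, or directly: a clean ring is sequentially Cohen-Macaulay, and an unmixed sequentially Cohen-Macaulay ring is Cohen-Macaulay), and (e) $\Rightarrow$ (d) is trivial. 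Hence all five conditions are equivalent.

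The only point requiring any care — and the one I would spell out — is the step from sequential Cohen-Macaulayness plus unmixedness to Cohen-Macaulayness, which is why Lemma \ref{seq} was isolated beforehand; the rest is a bookkeeping matter of citing Theorem \ref{main} and \cite[Theorem 3.6]{TT} in the right places. I do not expect a genuine obstacle: all the analytic work is already contained in Theorem \ref{main}, Lemma \ref{seq}, and the cited result of Terai--Trung.
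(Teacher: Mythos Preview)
Your proof is correct and follows essentially the same route as the paper: Theorem~\ref{main} for (a)$\Rightarrow$(b), the trivial (b)$\Rightarrow$(c), Lemma~\ref{seq} combined with \cite{HP} and unmixedness of $I^{(m)}$ for (c)$\Rightarrow$(d), and \cite[Theorem~3.6]{TT} to close the loop. The only cosmetic difference is that the paper invokes \cite[Theorem~3.6]{TT} directly for the full equivalence (a)$\Leftrightarrow$(d)$\Leftrightarrow$(e), whereas you derive (a)$\Rightarrow$(e) internally from Theorem~\ref{main} and Lemma~\ref{seq}; this is a harmless variation.
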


\begin{proof}
In view of Theorem \ref{main}, (a) \implies (b) holds.
 The implications (a)$\Leftrightarrow$ (d) $\Leftrightarrow$ (e)  follow from \cite[Theorem 3.6]{TT}.
 The implication (b)\implies (c) is trivial.

(c) \implies (d) Suppose that for some  integer $m\geq 3$,
   $S/I^{(m)}$   is clean. Then by \cite[Corollary 4.3]{HP}, $S/I^{(m)}$  is sequentially Cohen-Macaulay. On the other hand,
   $I^{(m)}$ is an unmixed  monomial ideal for all $m$, because $I$ is unmixed and $\Ass(S/I^{(m)})=\Ass(S/I).$
 Hence by  Lemma \ref{seq}, $S/I^{(m)}$  is  Cohen-Macaulay.
\end{proof}

It is known \cite{AV} that a simplicial complex $\Delta$ is complete
intersection if and only if $S/I_\Delta^m$ is Cohen-Macaulay for all
$m\in\mathbb{N}$. Since for a complete intersection monomial ideal
$I_\Delta$ the symbolic powers coincide with its ordinary powers,
 we have:

\begin{Corollary} \label{c.i}
Let $\Delta$ be a pure simplicial complex and $I=I_\Delta\subset S$. Then the
following conditions are equivalent:
\begin{enumerate}
\item[(a)]  $\Delta$ is a complete intersection.
\item[(b)] $S/I^m$ is clean for all integer $m>0$.
\item[(c)] $S/I^m$ is clean for some integer $m\geq 3$.
\item[(d)]  $S/I^m$ is Cohen-Macaulay for some integer $m\geq 3$.
\item[(e)]  $S/I^m$ is Cohen-Macaulay for all integer $m>0$.
\end{enumerate}
\end{Corollary}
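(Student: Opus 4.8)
The plan is to mirror the proof of Corollary~\ref{main2}, replacing the symbolic powers by the ordinary ones and \cite[Theorem~3.6]{TT} by \cite{AV}. The backbone (a)$\Leftrightarrow$(d)$\Leftrightarrow$(e) is precisely the characterization of complete intersection complexes through Cohen-Macaulayness of the powers $S/I^m$, which is what \cite{AV} supplies; so the task left for us is to graft conditions (b) and (c) onto this backbone, and for that it is enough to establish the chain (a)$\Rightarrow$(b)$\Rightarrow$(c)$\Rightarrow$(d).

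First, (a)$\Rightarrow$(b). A complete intersection simplicial complex is a matroid, so Theorem~\ref{main} already gives that $S/I^{(m)}$ is clean for every $m\in\mathbb{N}$; and for a complete intersection monomial ideal the symbolic and ordinary powers coincide, $I^{(m)}=I^m$, so $S/I^m$ is clean for all $m>0$. The implication (b)$\Rightarrow$(c) is trivial.

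Next, (c)$\Rightarrow$(d). If $S/I^m$ is clean for some integer $m\ge 3$, then it is sequentially Cohen-Macaulay by \cite[Corollary~4.3]{HP}. Moreover, any clean filtration $\mathcal F$ of $S/I^m$ has $\Supp\mathcal F=\Min(I^m)=\Min(I)$, and since $\Delta$ is pure the set $\Min(I)$ is equidimensional; as $\Ass(S/I^m)\subseteq\Supp\mathcal F$, the ideal $I^m$ is unmixed. Lemma~\ref{seq} then forces $S/I^m$ to be Cohen-Macaulay, which links the chain to the backbone and finishes the equivalences.

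The only real difficulty is hidden in the backbone: that Cohen-Macaulayness of a single power $S/I^m$ with $m\ge 3$ already compels $\Delta$ to be a complete intersection. If one wants to deduce this from the ``for all $m$'' statement of \cite{AV}, the natural first move is to observe that $S/I^m$ Cohen-Macaulay makes $I^m$ unmixed, hence equal to $\bigcap_{\frak p\in\Min(I)}(I^mS_{\frak p}\cap S)=\bigcap_{\frak p\in\Min(I)}\frak p^m=I^{(m)}$, so $S/I^{(m)}$ is Cohen-Macaulay and $\Delta$ is a matroid by Corollary~\ref{main2}; the delicate remaining step is to promote the single identity $I^m=I^{(m)}$ (in the presence of the matroid property) to normal torsionfreeness of $I$, i.e. $I^{j}=I^{(j)}$ for all $j$, after which \cite{AV} applies and yields both (d)$\Rightarrow$(a) and (a)$\Leftrightarrow$(e). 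Two elementary points should also be checked along the way: that complete intersection monomial ideals satisfy $I^{(m)}=I^m$, and that purity of $\Delta$ makes all minimal primes of $I$ of the same height.
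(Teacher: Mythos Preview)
Your chain (a)$\Rightarrow$(b)$\Rightarrow$(c)$\Rightarrow$(d) is essentially the paper's argument: complete intersection $\Rightarrow$ matroid plus $I^{m}=I^{(m)}$ feeds into Theorem~\ref{main}, and clean $\Rightarrow$ sequentially Cohen--Macaulay plus unmixed feeds into Lemma~\ref{seq}. No issue there.

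The gap is in the backbone. You invoke \cite{AV} for (a)$\Leftrightarrow$(d)$\Leftrightarrow$(e), but \cite{AV} only gives the ``for all $m$'' characterization, i.e.\ (a)$\Leftrightarrow$(e). It does \emph{not} give (d)$\Rightarrow$(a) for a single $m\ge 3$, and you are aware of this: your final paragraph is an outline of how one might try to recover (d)$\Rightarrow$(a), ending with the admission that ``the delicate remaining step'' --- promoting a single equality $I^{m}=I^{(m)}$ to normal torsionfreeness --- is left open. That step is not a formality: there exist matroids that are not complete intersections, so knowing $\Delta$ is a matroid and that $I^{m}=I^{(m)}$ for one $m$ does not by itself force $I^{j}=I^{(j)}$ for all $j$. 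Whatever argument closes this gap is doing the real work of (d)$\Rightarrow$(a), and you have not supplied it.

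The paper sidesteps all of this by citing \cite[Theorem~4.3]{TT} rather than \cite{AV}; that result of Terai and Trung already contains the ``single $m\ge 3$'' statement, so (a)$\Leftrightarrow$(d)$\Leftrightarrow$(e) comes as a package. Replace your appeal to \cite{AV} by \cite[Theorem~4.3]{TT} and drop the last paragraph, and your proof coincides with the paper's.
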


\begin{proof}
The equivalences (a)$\Leftrightarrow$(d)$\Leftrightarrow$(e) follow from \cite[Theorem 4.3]{TT}.
The implication $(b)\implies (c)$ is obvious.
The proof of $(c)\implies (d)$ is similar to that of  the same case in Corollary \ref{main2}.
 Note that, as $S/I^m$ is clean for some integer $m\geq 3$,
it follows that
$$\Ass(S/I^m)=\Min(I^m)=\Min(I)=\Ass(S/I).$$
It remains to show  $(a)\implies (b)$.  Since $I$ is complete
intersection, for any $m>0$, one has
$\Ass(S/I^m)=\Min(I^m)=\Min(I)$. Hence by the definition of symbolic
powers (see \cite[Definision 3.3.22]{V}), $I^m=I^{(m)}$ for all
$m>0$. Since any complete intersection complex is a matroid,
therefore by Theorem \ref{main}, $S/I^m$ is clean for all $m>0$.
\end{proof}

\begin{Example}\label{counter1}
{\em Let $I:=(x_1x_2,x_2x_3,x_3x_4)$. Obviously, $I$ ia an unmixed
square-free monomial ideal. Since $\mu(I)\leq 3$, it follows by
\cite[Corollary 2.6]{BDS} that $S/I$ is clean. On the other hand,
$I^\vee=(x_1x_3,x_2x_3,x_2x_4)$ is not matroidal. Hence, $I$ is not
the Stanley-Reisner ideal of a matroid. So by Corollary \ref{main2},
$S/I^{(m)}$ is not clean for all integer $m\geq 3$. Also, $S/I$ is
not complete intersection, so by Corollary \ref{c.i} $S/I^m$ is not
clean for all  integer $m\geq 3$. Now, consider the ideal $I$ as the
edge ideal of a graph $G$. Obviously, $G$ is a bipartite graph, so
by \cite[Corollary 10.3.17]{HH} $I$ is normally torsionfree.
Therefore for any $m$,
 $$\Ass(S/I^m)=\Ass(S/I)=\Min(I)=\Min(I^m).$$
  It follows by \cite[Corollary 3.5]{HP} that $S/I^m$ is not pretty clean for all  integer $m\geq 3$.}
\end{Example}

We note that the above example shows that, if $I\subset S$ is pretty clean monomial ideal, then necessarily
we do not have $S/I^{(m)}$ is pretty clean for all integer $m>0$.\\

\section{Second symbolic power and cleanness }
Let $\Delta$ be a 1-dimensional simplicial complex  and
$I=I_\Delta\subset S$. Minh and Trung in \cite{MT1} studied under
which conditions  $S/I^{(2)}$ and $S/I^2$ are Cohen-Macaulay. In
this section we will give a characterization for the
Cohen-Macaulayness of $S/I^{(2)}$ and $S/I^2$ in terms of the
cleanness property.

Let $G=(V,E)$ be a simple graph. In graph theory, the distance
between two vertices $u$ and $v$ of $G$ is the minimal length of
paths from $u$ to $v$ and is denoted by $\d(u,v)$. This length is
infinite if there is no path connecting them.  The diameter of $G$,
 $\diam(G)$, is defined by  $\diam(G):=\max\{\d(u,v)\;:\; u,v\in
V\}$.

 \begin{Theorem}\label{diam} Let $\Delta$ be a pure simplicial complex on $[n]$ with $\dim\Delta =1$ and $I=I_\Delta\subset S$.
  Then the following conditions are equivalent:
 \begin{enumerate}
\item[(a)] $S/I^{(2)}$   is clean.
\item[(b)] $S/I^{(2)}$   is Cohen-Macaulay.
\item[(c)]$\diam\Delta\leq 2$.
  \end{enumerate}
  \end{Theorem}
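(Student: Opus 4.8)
The plan is to prove the cycle of equivalences (a) $\Rightarrow$ (b) $\Rightarrow$ (c) $\Rightarrow$ (a). The implication (a) $\Rightarrow$ (b) is essentially free: if $S/I^{(2)}$ is clean then it is pretty clean, hence sequentially Cohen-Macaulay by \cite[Corollary 4.3]{HP}; since $I$ is unmixed (because $\Delta$ is pure) and $\Ass(S/I^{(2)})=\Ass(S/I)$, the ideal $I^{(2)}$ is also unmixed, and Lemma \ref{seq} then forces $S/I^{(2)}$ to be Cohen-Macaulay. The implication (b) $\Leftrightarrow$ (c) is precisely the theorem of Minh and Trung \cite{MT1} quoted in the introduction, so the real content lies in (c) $\Rightarrow$ (a): assuming $\diam\Delta\le 2$, I must exhibit an explicit clean (i.e. shellable) prime filtration of $S/I^{(2)}$.

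For the main implication I would follow the same polarization strategy used in the proof of Theorem \ref{main}. By \cite[Theorem 3.10]{S} it suffices to show $T/(I^{(2)})^p$ is clean, and since $(I^{(2)})^p$ is a squarefree monomial ideal this is equivalent to showing the Alexander dual $J:=((I^{(2)})^p)^\vee$ has linear quotients (by \cite[Lemma 2.1]{BDS}). So I would compute $J$ explicitly. Writing $\Delta^c=\langle F_1,\dots,F_t\rangle$ with each $F_i$ of size $r=n-2$, the formula $((I^{(2)})^p)^\vee=\sum_i((P_{F_i}^2)^p)^\vee$ together with \cite[Proposition 2.5(2)]{F} shows that $J$ is generated by the monomials $x_{i_1,a_1}\cdots x_{i_r,a_r}$ with $\{i_1,\dots,i_r\}\in\mathcal F(\Delta^c)$ and exponents $1\le a_j\le 2$ subject to $\sum a_j\le r+1$; that is, at most one $a_j$ equals $2$. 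Translating back through the complement, the condition $\{i_1,\dots,i_r\}\in\mathcal F(\Delta^c)$ means the complementary $2$-set is a facet (edge) of $\Delta$, so the combinatorial data of $J$ is governed directly by the edges of $\Delta$.

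The crux is then to find the right order on $G(J)$ realizing linear quotients, and this is where $\diam\Delta\le 2$ must enter. In the matroid case of Theorem \ref{main}, the exchange property of $I^\vee$ supplied the generator $w$ needed at each step; here $\Delta$ need not be a matroid, so I expect to replace that exchange step by a graph-theoretic argument: given two generators $u>v$ differing first in position $t$, I need to produce $w\in G(J)$ with $w>v$, $w:v$ a single variable, and that variable dividing $u$. The condition $\diam\Delta\le 2$ guarantees that any two vertices are joined by a path of length $\le 2$, which is exactly what should let me "repair" the index $j_t$ appearing in $v$ to one of the indices of $u$ while staying inside the edge set of $\Delta$ — possibly using an intermediate common neighbor when $\{i_t,j_t\}$ is not itself an edge. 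I would choose essentially the same ordering of the doubled variables as in Theorem \ref{main} ($x_{i,a}>x_{j,b}$ iff $a<b$, or $a=b$ and $i<j$) and the reverse lexicographic order on $G(J)$, then carefully check the inequality $w>v$ by the same case analysis on whether the exponents $a_t,b_t$ coincide. The main obstacle is precisely this verification: ensuring that the diameter hypothesis is strong enough to always furnish a valid $w$, and handling the "doubled" coordinate (the unique index with exponent $2$) correctly in the two generators, since a naive repair could push the exponent sum above $r+1$ or break the monotonicity needed for $w>v$. I would also need to remark that the same proof, run with $I^{(2)}$ replaced by $I^2$ (which agrees with $I^{(2)}$ on the relevant facet structure after noting the extra embedded primes, or by invoking the analogous statement for ordinary powers), yields the parenthetical version of the theorem.
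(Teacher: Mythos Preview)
Your handling of (a)$\Rightarrow$(b) and (b)$\Rightarrow$(c) matches the paper exactly. For (c)$\Rightarrow$(a) you take the Alexander-dual/linear-quotients route of Theorem~\ref{main}, whereas the paper, after the same polarization step, works on the other side of the duality: it writes $(I^{(2)})^p=I_\Gamma$ for an explicit complex $\Gamma$ on the doubled vertex set and proves $\Gamma$ is shellable directly, invoking Dress rather than \cite[Lemma~2.1]{BDS}. The two formulations are equivalent, so your plan is legitimate in principle; the difference is in how the order is built.

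The gap is in the ordering. You propose to reuse verbatim the variable order and reverse-lex order from Theorem~\ref{main}, and then replace the matroid exchange step by a ``common neighbor'' argument from $\diam\Delta\le 2$. But the exchange step in Theorem~\ref{main} produces an index $i_l$ with $l\le t$ lying in the support of $u$; the diameter hypothesis only hands you a vertex $m$ adjacent to both endpoints, and there is no reason $m$ should appear among $i_1,\dots,i_t$ or satisfy the inequalities needed for $w>v$. The paper does \emph{not} use the revlex order: it partitions the edges of $\Delta$ into blocks $A_h=\{\,e\in\mathcal F(\Delta):\min e=h\text{ and }e\notin A_1\cup\cdots\cup A_{h-1}\,\}$, orders the corresponding facets of $\Gamma$ block by block, and the diameter condition is invoked only at the transition into a new block $A_h$---precisely to choose which edge $\{m,h\}$ to list first when no previously listed edge contains $h$. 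So the ordering is graph-adapted rather than monomial-order-adapted, and the place where $\diam\Delta\le 2$ enters is more localized than your sketch suggests. If you want to push your dual formulation through, you would need to translate this block ordering to an order on $G(J)$ rather than rely on revlex.

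Finally, there is no ``parenthetical version'' of this theorem; the statement for $I^2$ is a separate corollary with a different list of equivalent conditions, so that closing remark should be dropped.
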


\begin{proof}
 $(a)\implies (b)$ Since $S/I^{(2)}$  is sequentially  Cohen-Macaulay and  $I^{(2)}$ is unmixed,  the desired conclusion follows from Lemma \ref{seq}.

 $(b)\implies (c)$ follows from
\cite[Theorem 2.3]{MT1}.

$(c)\implies (a)$  By \cite[Theorem 3.10]{S} it is enough to show
that $S/(I^{(2)})^p$ is clean. Let $I=I_\Delta=\cap_{i=1}^tP_{F_i}$
be a primary decomposition of $I$. Then $\Delta^c= \langle
F_1,\ldots,F_t\rangle$ with $|F_i|=n-2$ for all  $i=1,\ldots,t$. We
know that
\[
(I^{(2)})^p=\bigcap_{i=1}^t(P^2_{F_i})^p.
\]
If $F\subset [n]$, then by \cite[Proposition 2.5(2)]{F}
\[
(P^2_{F})^p=\underset{1\leq j\leq n-2}\bigcap P_{(F,2_j)}\cap
P_{(F,1)},
\]
where if $F=\{r_1,\ldots,r_{n-2}\}$ with $r_1<r_2<\cdots<r_{n-2}$,
then we set $(F,1):=\{(r_i,1) \;|\; r_i\in F\}$ and
$(F,2_j):=\{(r_j,2)\}\cup\{(r_i,1)\;|\; 1\leq i\leq n-2, i\neq j\}$.
Note that $(I^{(2)})^p$ is a monomial ideal in a polynomial ring
$T=K[x_{(1,1)},\ldots,x_{(n,1)},x_{(1,2)},\ldots, x_{(n,2)}]$. Since
$(I^{(2)})^p$ is the Stanley-Reisner ideal of simplicial complex
$$\Gamma=<(F_i,1)^c,(F_i,2_j)^c\;:\;1\leq i\leq t,\;1\leq j\leq
n-2>,$$  by a result of Dress \cite{D}, it is enough to prove that
$\Gamma$ is shellable.

We set $A_0:=\emptyset$ and
$A_i:=\{F_j^c\in\mathcal{F}(\Delta)\;|\;i\in
F_j^c\;\text{and}\;F_j^c\notin\bigcup_{s=1}^{i-1} A_{s}\}$ for all
$i=1,\ldots,n$. Note that $\mathcal{F}(\Delta)=\bigcup_{i=1}^n A_i$.
We order the facets of $\Gamma$ by the following process and show
that the given order is  a shelling order. For the convenience we
can assume that $A_1=\{F_1^c,\ldots,F_{s_1}^c\}$ for some $1\leq
s_1\leq t$. Let the initial part of our  order be as follow:
$$(*)\;(F_1,1)^c,(F_1,2_1)^c,\ldots,(F_1,2_{n-2})^c,(F_2,1)^c,(F_2,2_1)^c,\ldots,(F_2,2_{n-2})^c,\ldots,$$
$$(F_{s_1},1)^c,(F_{s_1},2_1)^c,\ldots,(F_{s_1},2_{n-2})^c.$$
Then  the following  inequalities hold:
$$n=|(F_1,1)^c\cap(F_1,2_{j})^c|-1=\dim(<(F_1,1)^c>\cap<(F_1,2_{j})^c>)$$
$$\leq\dim(<(F_1,1)^c,(F_1,2_1)^c,\ldots,(F_1,2_{j-1})^c>\cap<(F_1,2_{j})^c>)$$
$$\leq\dim<(F_1,2_{j})^c>-1=|(F_1,2_{j})^c|-2=n,$$
Now, let $2\leq d \leq s_1$. Then
$$n=\dim(<(F_1,1)^c>\cap<(F_d,1)^c>)$$
$$\leq\dim(<(F_1,1)^c,(F_1,2_1)^c,\ldots,(F_{d-1},2_{n-2})^c>\cap<(F_d,1)^c>)$$
$$\leq\dim<(F_d,1)^c>-1=n,$$
Also, for any $1\leq j\leq n-2$, we have
$$n=\dim(<(F_d,1)^c>\cap<(F_d,2_j)^c>)$$
$$\leq\dim(<(F_1,1)^c,(F_1,2_1)^c,\ldots,(F_d,1)^c,\ldots,(F_d,2_{j-1})^c>\cap<(F_d,2_j)^c>)$$
$$\leq\dim<(F_d,2_j)^c>-1=n.$$
Suppose that $\Gamma_1$ be a simplicial complex whose facets are all
of the sets belong to $(*)$. If we rename the facets of $\Gamma_1$
in the same order above by $G_1,\ldots,G_{s_1(n-1)}$, then it is
easy to see that  $<G_1,\ldots,G_{i-1}>\cap <G_i>$ is a pure
simplicial complex for all $i=1,\ldots,s_1(n-1)$. Therefore,
$\Gamma_1$ is a shellable.

Assume that $A_i=\{F_{s_{i-1}+1}^c,\ldots,F_{s_i}^c\}$ for $1\leq
i\leq h-1<n$, where $s_0=0$ and $s_{i-1}<s_i$.  Then we may assume by induction
process that the following order is a shelling order for the
simplicial complex with this set of facets.
$$(F_1,1)^c,(F_1,2_1)^c,\ldots,(F_1,2_{n-2})^c,\ldots,
(F_j,1)^c,(F_j,2_1)^c,\ldots,(F_j,2_{n-2})^c,$$
$$(F_{j+1},1)^c,(F_{j+1},2_1)^c,\ldots,(F_{j+1},2_{n-2})^c,\ldots,
(F_{s_{h-1}},1)^c,(F_{s_{h-1}},2_1)^c,\ldots,(F_{s_{h-1}},2_{n-2})^c,$$
where $1<j<s_{h-1}$.

Now, let $1<h\leq n$. If there exists $F^c\in\bigcup_{i=1}^{h-1}A_i$
such that $h\in F^c$, then we take an arbitrary element $G$ of $A_h$
and set $F_{s_{h-1}+1}^c:=G$.  In this case, we have
$$n=\dim(<(F,1)^c>\cap<(F_{s_{h-1}+1},1)^c>)$$
$$\leq\dim(<(F_1,1)^c,(F_1,2_1)^c,\ldots,(F_{s_{h-1}},2_{n-2})^c>\cap<(F_{s_{h-1}+1},1)^c>)$$
$$\leq\dim<(F_{s_{h-1}+1},1)^c)>-1=n.$$

Otherwise, for any  $F^c\in\bigcup_{i=1}^{h-1}A_i$,  $h\not\in F^c$.
Hence $\{1,h\}\not\in \mathcal F(\Delta)$. Since $\diam(\Delta)\leq
2$, it follows that there exists $m\in[n]$ such that $m\neq 1,h$ and
$\{m,h\}\in A_h$,  and  $F^c:=\{1,m\}\in A_1$. In this case we set
$F^c_{s_{h-1}+1}:=\{m,h\}$.

Now, the following inequalities hold:
$$n=\dim(<(F,1)^c>\cap<(F_{s_{h-1}+1},1)^c>)$$
$$\leq\dim(<(F_1,1)^c,(F_1,2_1)^c,\ldots,(F_{s_{h-1}},2_{n-2})^c>\cap<(F_{s_{h-1}+1},1)^c>)$$
$$\leq\dim<(F_{s_{h-1}+1},1)^c>-1=n.$$
We  order  all other facets  of $\Gamma$ which correspond to $A_h$,
as in the following:
$$(F_{s_{h-1}+1},2_1)^c,\ldots,(F_{s_{h-1}+1},2_{n-2})^c,\ldots,(F_{s_h},1)^c,(F_{s_h},2_1)^c,\ldots,(F_{s_h},2_{n-2})^c,$$
where $s_{h-1}<s_h$.

The same as previous, we can easily check that the given order is a
shelling order.
\end{proof}

A 1-dimensional simplicial complex $\Delta$ on the vertex set $[n]$
is called a cycle of length $n$ if  the facets of $\Delta$ are
$\{1,n\}$ and  $\{i,i+1\}$ for all $i=1,\ldots,n-1$.

\begin{Corollary} Let $\Delta$ be a pure simplicial complex on $[n]$
with $\dim\Delta =1$ and $I=I_\Delta\subset S$. Then the following
conditions are equivalent:
 \begin{enumerate}
\item[(a)] $S/I^2$   is clean.
\item[(b)] $S/I^2$   is Cohen-Macaulay.
\item[(c)]  $\Delta$ is a path of length $1,2$ or a cycle of length $3,4,5$.
  \end{enumerate}
  \end{Corollary}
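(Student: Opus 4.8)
The plan is to close the cycle $(a)\Rightarrow(b)\Rightarrow(c)\Rightarrow(a)$, noting that the equivalence $(b)\Leftrightarrow(c)$ is essentially the $1$-dimensional Cohen--Macaulayness criterion of Minh and Trung, so that the genuinely new content is the link between cleanness and the other two conditions. For $(a)\Rightarrow(b)$ I would run the argument used for the same implication in Theorem \ref{diam}: if $S/I^{2}$ is clean then it is sequentially Cohen--Macaulay by \cite[Corollary 4.3]{HP}, and cleanness forces $\Ass(S/I^{2})\subseteq\Supp\mathcal F=\Min(I^{2})=\Min(I)$, so $I^{2}$ is unmixed; Lemma \ref{seq} then yields that $S/I^{2}$ is Cohen--Macaulay.

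For $(b)\Rightarrow(c)$ the quickest route is to quote the characterization of Minh and Trung from \cite{MT1}, which states precisely that, for a pure $1$-dimensional $\Delta$, $S/I^{2}$ is Cohen--Macaulay if and only if $\Delta$ is a path of length $1$ or $2$ or a cycle of length $3,4$ or $5$. If one prefers to stay internal to the present paper, I would argue as follows. Since $S/I^{2}$ Cohen--Macaulay forces $I^{2}$ to be unmixed, and since $\sqrt{I^{2}}=\sqrt I$ while an unmixed ideal equals the intersection of the minimal primary components of any irredundant primary decomposition, we get $I^{2}=I^{(2)}$; Theorem \ref{diam} then gives $\diam\Delta\le 2$, so $\Delta$ is a connected graph $G$ on $[n]$. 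A monomial $w$ of degree $d$ lies in $I_\Delta^{(2)}$ iff for every edge $\{u,v\}$ of $G$ the sum of the $x_u$- and $x_v$-exponents of $w$ is $\le d-2$; using this, a triangle in $\overline{G}$ yields an element of degree $3$ in $I_\Delta^{(2)}\setminus I_\Delta^{2}$, while a triangle $\{a,b,c\}$ of $G$ together with any fourth vertex $e$ yields $x_ax_bx_cx_e\in I_\Delta^{(2)}\setminus I_\Delta^{2}$ (every $2$-partition of $\{a,b,c,e\}$ contains an edge of the triangle, and the degree-$3$ generators are too large to divide it). Hence $I_\Delta^{(2)}=I_\Delta^{2}$ forces either $G=K_3=C_3$ or both $G$ and $\overline{G}$ triangle-free; in the latter case the Ramsey bound $R(3,3)=6$ gives $n\le 5$, and inspecting connected graphs on at most five vertices with diameter $\le 2$ leaves exactly $P_2,P_3,C_4,C_5$.

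For $(c)\Rightarrow(a)$ I would split off the $5$-cycle. When $\Delta$ is a path of length $1$ or $2$ or a cycle of length $3$ or $4$, the ideal $I_\Delta$ is a complete intersection (respectively $0$, $(x_1x_3)$, $(x_1x_2x_3)$ and $(x_1x_3,x_2x_4)$), so $S/I^{2}$ is clean by Corollary \ref{c.i}. When $\Delta$ is the $5$-cycle, $I_\Delta$ is the edge ideal of $\overline{C_5}$, which is isomorphic to $C_5$ and in particular triangle-free, so $I_\Delta^{(2)}=I_\Delta^{2}$; since $\diam C_5=2$, condition (c) of Theorem \ref{diam} holds, whence $S/I_\Delta^{2}=S/I_\Delta^{(2)}$ is clean.

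The main obstacle is precisely the $5$-cycle, the unique complex appearing in (c) that is not a complete intersection: its cleanness cannot be read off from Corollary \ref{c.i} and must instead be obtained from the identity $I^{(2)}=I^{2}$ — equivalently, from the triangle-freeness of the complement graph, which forces the second symbolic power of an edge ideal to coincide with its square — and then from the shellability established in the proof of Theorem \ref{diam}. A secondary technical point is the step "$I^{2}$ unmixed $\Rightarrow I^{2}=I^{(2)}$", which I would justify by combining $\Min(I^{2})=\Min(I)$ with the structure of primary decompositions of unmixed ideals.
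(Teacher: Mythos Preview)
Your proposal is correct and follows the same cycle of implications as the paper. For $(a)\Rightarrow(b)$ you argue exactly as the paper does (clean $\Rightarrow$ sequentially Cohen--Macaulay, unmixed, then Lemma~\ref{seq}); for $(b)\Rightarrow(c)$ your primary route --- quoting \cite{MT1} --- is precisely what the paper does (it handles $n\le 3$ by inspection and invokes \cite[Corollary~3.4]{MT1} for $n\ge 4$), and your alternative Ramsey/triangle argument is a valid self-contained replacement.

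The only genuine deviation is in $(c)\Rightarrow(a)$. The paper treats all five complexes uniformly: it simply observes that in each case $\diam\Delta\le 2$ and $I^{(2)}=I^{2}$, and then invokes Theorem~\ref{diam}. You instead dispatch $P_2,P_3,C_3,C_4$ via Corollary~\ref{c.i} (complete intersection) and reserve Theorem~\ref{diam} for $C_5$ only. Both work; the paper's route is marginally cleaner since it avoids the case split, while yours makes explicit why $C_5$ is the interesting case. One caution: your justification ``triangle-freeness of the complement forces $I^{(2)}=I^{2}$ for edge ideals'' is a true general fact, but the paper does not rely on it --- it just asserts $I^{(2)}=I^{2}$ as ``easy to see'' for these finitely many small examples, which is safer if you do not want to import an extra reference.
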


 \begin{proof}
$(a)\implies (b)$ Since $S/I^2$  is sequentially  Cohen-Macaulay and
$I^2$ is unmixed,  the desired conclusion follows from Lemma
\ref{seq}.

  $(b)\implies (c)$ If $n=2$, then $\Delta$ is a path of length $1$.
If $n=3$, then $\Delta$ is either a path of length $2$ or a triangle
(a cycle of length $3$). Finally if $n\geq 4$, then by
\cite[Corollary 3.4]{MT1}, $\Delta$ is a cycle of length $4$ or $5$.

$(c)\implies (a)$ It is easy to see that in each case, we have
$\diam\Delta\leq 2$ and  $I^{(2)}=I^{2}$. Hence the desired
conclusion follows by Theorem \ref{diam}.
\end{proof}

It is known that if $I$ is a monomial ideal and $S/I$ is clean, then
$S/I$ is sequentially Cohen-Macaulay. In particular when $I$ is
unmixed, then $S/I$ is Cohen-Macaulay.  But the converse is not true
in general. In some special cases, like edge ideals of unmixed
bipartite graphs, it is known that Cohen-Macaulayness and cleanness
are equivalent.  As another corollary of our results we get the
following:

\begin{Corollary}
 Let $m>1$ be an integer,  $\Delta$  a pure simplicial complex with $\dim\Delta =1$, and $I=I_\Delta\subset S$. Then
 $S/I^{(m)}$ ($S/I^m$) is clean if and only if
 $S/I^{(m)}$ ($S/I^m$) is Cohen-Macaulay.
\end{Corollary}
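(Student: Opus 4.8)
The plan is to obtain each of the two equivalences from results already in the paper, treating the two implications separately and, within the harder one, splitting according to the size of $m$.

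\emph{For the implication ``clean $\implies$ Cohen-Macaulay''} I would give one argument valid for every $m>1$. If $S/I^{(m)}$ (resp. $S/I^m$) is clean, then it is pretty clean, hence sequentially Cohen-Macaulay by \cite[Corollary 4.3]{HP}. Moreover $I^{(m)}$ is unmixed, since $\Ass(S/I^{(m)})=\Ass(S/I)=\Min(I)$ and $I$ is unmixed because $\Delta$ is pure; and cleanness forces $\Ass(S/I^m)=\Min(I^m)=\Min(I)$, so $I^m$ is unmixed as well. Lemma \ref{seq} then yields Cohen-Macaulayness in both cases --- this is exactly the reasoning used for $(a)\implies(b)$ in Theorem \ref{diam}.

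\emph{For the implication ``Cohen-Macaulay $\implies$ clean''} the case distinction enters. If $m\ge 3$: when $S/I^{(m)}$ is Cohen-Macaulay, Corollary \ref{main2} (the implication $(d)\implies(a)$) makes $\Delta$ a matroid, and then Theorem \ref{main} (the implication $(a)\implies(b)$) makes $S/I^{(m)}$ clean; the ordinary-power statement is identical with Corollary \ref{c.i} replacing Corollary \ref{main2}, forcing $\Delta$ to be a complete intersection. If $m=2$: Cohen-Macaulayness of $S/I^{(2)}$ gives $\diam\Delta\le 2$ by \cite[Theorem 2.3]{MT1}, and then Theorem \ref{diam} (the implication $(c)\implies(a)$) gives cleanness; for $S/I^2$ one uses instead the Corollary preceding this one. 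Since every $m>1$ is either $\ge 3$ or equal to $2$, this finishes the proof.

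I do not anticipate a genuine obstacle here: the statement is a synthesis of Theorem \ref{main}, Corollaries \ref{main2} and \ref{c.i}, Theorem \ref{diam}, and the Corollary preceding this one. The only thing needing care is that no single earlier result covers all $m>1$ simultaneously --- the matroid and complete-intersection characterizations govern $m\ge 3$, while the finer diameter-type classification of Minh and Trung governs $m=2$ --- so the write-up must keep the case split explicit rather than seek a uniform argument.
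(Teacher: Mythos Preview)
Your proposal is correct and matches the approach the paper intends: the corollary is stated without proof as a direct synthesis of Corollaries \ref{main2} and \ref{c.i} (handling $m\ge 3$) together with Theorem \ref{diam} and the corollary immediately preceding this one (handling $m=2$), with Lemma \ref{seq} supplying the clean $\Rightarrow$ Cohen--Macaulay direction. One cosmetic point: for $m=2$ you can invoke $(b)\Rightarrow(a)$ in Theorem \ref{diam} directly rather than routing through $\diam\Delta\le 2$.
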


\section*{Acknowledgments}
We would like to thank J\"urgen Herzog for raising the question of
whether all (symbolic) powers of a matroid  are clean and for
reading an earlier version of this manuscript.

\end{document}